\documentclass[11pt]{article}
\usepackage[margin=1in]{geometry}
\usepackage[T1]{fontenc}
\usepackage[utf8]{inputenc}
\usepackage{lmodern}
\usepackage{amsmath,amssymb,amsthm,mathtools}
\usepackage{microtype}
\usepackage{needspace}
\usepackage{tikz}
\usetikzlibrary{arrows.meta}
\usepackage{hyperref}
\hypersetup{colorlinks=true,linkcolor=black,citecolor=black,urlcolor=black,pdftitle={Median components of resonance graphs on surfaces},pdfauthor={Guangfu Wang and Ke Kang}}
\newtheorem{theorem}{Theorem}[section]
\newtheorem{lemma}[theorem]{Lemma}
\newtheorem{proposition}[theorem]{Proposition}
\newtheorem{corollary}[theorem]{Corollary}
\theoremstyle{definition}
\newtheorem{conjecture}[theorem]{Conjecture}
\newtheorem{example}[theorem]{Example}
\newcommand{\F}{\mathcal F}
\newcommand{\M}{\mathcal M}
\newcommand{\FF}{\mathbb F_2}
\newcommand{\CAT}{\operatorname{CAT}(0)}
\newcommand{\Lk}{\operatorname{Lk}}
\title{A Proof of the  Tratnik-Ye
	Medianity Conjecture of Resonance Graphs on Surfaces\thanks{This work was supported by the Natural Science Foundation of Shandong Province (Grant No. ZR2024MA073).}}
\author{Guangfu Wang and Ke Kang\\
	School of Mathematics and Information Sciences, Yantai University\\
	Yantai, Shandong Province, 264005, China\\
	\texttt{gfwang@ytu.edu.cn}}
\date{}
\begin{document}
\maketitle

\begin{abstract}
We prove the Tratnik--Ye conjecture that every connected component of a resonance graph of perfect matchings on a closed surface is median whenever the allowed even faces form a proper subset of all faces. The embedding need not be cellular or strong, and the graph need not be bipartite. We also show that the quotient associated with each face is a tree and that these quotients give an isometric embedding of each component into a Cartesian product of trees. This yields a criterion for the facial parity embedding into a hypercube to be isometric.
\end{abstract}
\noindent\textbf{2020 Mathematics Subject Classification.} 05C70, 05C10, 05C12, 20F65.\\
\textbf{Keywords.} Perfect matching; resonance graph; median graph; surface embedding; CAT(0) cube complex.

\section{Introduction}\label{sec:intro}
For a graph $K$, write $V(K)$ and $E(K)$ for its vertex and edge sets. A perfect matching of a finite graph is a set of edges incident with each vertex exactly once. Facial flips connect perfect matchings by replacing one alternating edge class of an even facial cycle with the other. The resulting resonance graphs arose in the study of Kekul\'e structures of benzenoid hydrocarbons. Their early development includes the work of Gr\"undler \cite{Grundler} and the $Z$-transformation graphs of Zhang, Guo, and Chen \cite{ZhangGuoChen}; Zhang's survey \cite{ZhangSurvey} describes the subsequent theory for plane bipartite graphs.

Median and lattice structures are central to this theory. Klav\v zar, \v Zigert, and Brinkmann \cite{KlavzarZigertBrinkmann} proved medianity for catacondensed even ring systems. Lam and Zhang \cite{LamZhang} established a distributive lattice on the perfect matchings of a plane elementary bipartite graph (a connected bipartite graph in which every edge belongs to a perfect matching), with its resonance graph as the covering graph. Propp's work on orientations and matchings, circulated in 1993 and published in revised form in 2025 \cite{Propp}, provides a related lattice framework. Beyond the plane bipartite setting, Tratnik and \v Zigert Pleter\v sek \cite{TratnikPletersek} studied fullerenes and proposed medianity of their resonance components. The distinction between a face coordinate and an isometric hypercube coordinate remains relevant even for plane bipartite graphs; recent results of Che \cite{Che2026} determine when the isometric dimension attains the number of relevant finite faces.

We use the surface conventions of Tratnik and Ye \cite{TratnikYe}. A surface is a connected closed surface, orientable or nonorientable. Let $G$ be a finite simple graph embedded in a surface $\Sigma$. A \emph{face} is a component of $\Sigma\setminus G$, and $\partial f$ denotes the frontier of a face $f$ in $\Sigma$. Write $\F(G)$ for the set of faces. A face is \emph{even} if its boundary is a simple cycle of even length. For an even face $f$, write $E(f)$ and $V(f)$ for the edge and vertex sets of its boundary. No regularity is imposed on the boundaries of other faces. In particular, the embedding is not assumed to be cellular. Here cellular means that every complementary region is an open disc; a strong embedding is a cellular embedding in which every facial boundary is a cycle.

Let $\F\subseteq\F(G)$ consist of even faces, and let $\M(G)$ denote the set of perfect matchings of $G$. The \emph{resonance graph} $R(G;\F)$ has vertex set $\M(G)$, with $M$ adjacent to $N$ precisely when
\[
 M\triangle N=E(f)\qquad\text{for some }f\in\F,
\]
where $\triangle$ denotes symmetric difference. Such an edge is a flip of $f$. The cycle $\partial f$ is then \emph{$M$-alternating}: its edges alternate between membership and nonmembership in $M$. Conversely, flipping an $M$-alternating face gives a perfect matching. The same face remains alternating after the flip.

For a connected graph $H$, let $d_H$ be its graph distance and set
\[
 I_H(x,y)=\{z\in V(H):d_H(x,z)+d_H(z,y)=d_H(x,y)\}.
\]
The graph is \emph{median} if $I_H(x,y)\cap I_H(y,z)\cap I_H(z,x)$ contains exactly one vertex for every triple $x,y,z$. Median graphs originate in Avann's work \cite{Avann}; Bandelt \cite{Bandelt} characterized them as retracts of hypercubes. In particular, every median graph admits an isometric embedding into a hypercube.

Tratnik and Ye \cite{TratnikYe} proved that, when $\F\subsetneq\F(G)$, every resonance component embeds as an induced subgraph of a hypercube. They also gave an example in which their embedding is not isometric. They posed the following conjecture, numbered Conjecture~5.3 in the preprint version of \cite{TratnikYe}.
\begin{conjecture}\label{conj:ty}
Let $G$ be a finite simple graph embedded in a connected closed surface, and let $\F\subsetneq\F(G)$ be a set of even faces. Every connected component of $R(G;\F)$ is a median graph.
\end{conjecture}

Our main result settles Conjecture~\ref{conj:ty} in this generality.
\begin{theorem}\label{thm:main}
Let $G$ be a finite simple graph embedded in a connected closed surface, and let $\F\subsetneq\F(G)$ be a set of even faces. Every connected component of $R(G;\F)$ is a median graph.
\end{theorem}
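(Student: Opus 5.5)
Fix a component $C$ of $R(G;\F)$ and a face $g\in\F(G)\setminus\F$. The plan is to build a height function for each face and use it to embed $C$ into a product of trees.

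\textbf{Step 1: face heights.} For $M\in C$ and a face $f\in\F$, I first define an integer ``height'' $h_M(f)$ counting net flips of $f$ relative to a base matching $M_0\in C$. Fix an orientation convention using the parity of $|M\cap E(f)|$ on the boundary cycle, with a local orientation at $\partial f$ chosen once. Since $g$ is never flipped, I take its height to be zero; this is where the hypothesis $\F\subsetneq\F(G)$ enters. Along any path in $C$ from $M_0$ to $M$, the mod-2 class of flips of $f$ is determined by $M\triangle M_0$. This is because $M\triangle M_0$ is a $\FF$-sum of facial boundaries in $\F$, and that representation is unique once $g$ is excluded: the only relation among facial boundaries in $\FF$-homology is the sum of all faces of a component of $\Sigma$ minus a nullhomologous part. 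This gives Tratnik--Ye's induced hypercube embedding.

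\textbf{Step 2: lift to trees.} Parity alone is not isometric, so I refine it. For each $f\in\F$ I form the quotient $T_f$ of $C$ obtained by contracting every edge that is not a flip of $f$. I will show that $T_f$ is a tree. The key claim is that any closed walk in $C$ traverses the flips of $f$ in a cancelling way. Concretely, I would prove a local ``square property'': two flips of distinct faces $f,f'$ that are both available at $M$ either commute, because $E(f)\cap E(f')=\emptyset$ and they form a square in $C$, or cannot both be performed. The plan is then to show that the 2-complex obtained by filling all commuting squares is simply connected. For this I would use a lifting argument on the surface. A cycle in $C$ yields a 1-cycle of face-flip counts with values in $\mathbb Z$ (twisted by orientation characters in the nonorientable case). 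Since the face $g$ is pinned, this integer chain must vanish. A van Kampen / disc-diagram argument then reduces the cycle through squares.

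\textbf{Step 3: conclude medianity.} With $\pi_1=1$ for the square complex, I check Gromov's link condition to get that the cube complex is $\CAT$, and hence that its 1-skeleton $C$ is median (Chepoi--Roller). The link condition requires that pairwise-commuting available flips at $M$ span a cube. This holds because pairwise edge-disjoint alternating faces can be flipped independently. I also need that $C$ is the full 1-skeleton, i.e.\ that no two distinct squares share three vertices. This follows because a flip is determined by its face. The tree quotients $T_f$ then give the isometric embedding into $\prod_f T_f$ as a byproduct.

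\textbf{Main obstacle.} The hardest step is simple connectivity in Step 2, namely showing that every cycle of $C$ is null-homotopic via squares. The risk is that on a surface a cycle could wind nontrivially, with flips of faces accumulating around a handle. I would first try an integer-height argument in the style of Propp / Lam--Zhang. Using $g$ as a fixed base face, I would define for each $M$ a potential on faces by integrating $M-M_0$ across dual paths from $g$; well-definedness on non-simply-connected surfaces would be checked via the unique $\FF$ representation from Step 1, then upgraded to $\mathbb Z$. Once heights are well defined, I would finish by induction on total height difference, repeatedly flipping a locally maximal face. This would show that geodesics are obtained by monotone height changes, which yields both the tree structure and simple connectivity.
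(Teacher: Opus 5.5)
\textbf{There is a genuine gap: simple connectivity is never proved.} Your overall architecture matches the paper's: a cube complex of simultaneous flips of pairwise disjoint alternating faces, flag links, simple connectivity, then Chepoi's theorem. Step 1 and the link check are fine in substance. In a noncellular embedding, though, uniqueness of the $\FF$-representation should be argued via connectivity of the face dual and the fact that an even face lies on only one side of each boundary edge, not via homology of a cell structure. The real problem is the step you flag yourself, and neither mechanism you propose for it works.

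\emph{Net flip counts.} Vanishing of every face's net flip count around a closed walk is only a necessary condition, whether mod $2$ or in $\mathbb Z$. (Integer heights are in fact well defined: the vectors $\mathbf 1_{B_f}-\mathbf 1_{A_f}$, with $A_f,B_f$ the two alternating classes of $E(f)$, are $\mathbb Z$-independent by the same dual-connectivity argument.) A disc-diagram argument presupposes a filling rather than producing one. To fill, you must bring two occurrences of a face $f$ together, and that requires knowing that every face flipped between them is vertex-disjoint from $f$. Your height bookkeeping says nothing about this.

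\emph{Flipping a locally maximal face.} The fallback in the style of Propp and Lam--Zhang needs a coherent up/down orientation of all faces. That comes from bipartiteness and planarity, and it fails here. Suppose a pentagon is surrounded by five allowed hexagons, as in a fullerene. Each hexagon contains the two spokes at the ends of its pentagon edge, and these lie in the same alternating class. Requiring neighbouring hexagons to give opposite signs to their common spoke then forces five sign changes around the pentagon, which is impossible. Across such edges the constraint becomes $|n_f+n_b|\le 1$ rather than $|n_f-n_b|\le 1$. So there is no lattice structure and no controlled notion of a flippable local maximum to induct on.

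\textbf{The missing idea is the paper's propagation lemma.} It says: if $f$ is alternating at both ends of a walk that never flips $f$, then all faces meeting $f$ are flipped with the same parity along that walk. The proof is local. At a boundary vertex $v$ of $f$, the edges at $v$ not on $\partial f$ are unmatched at both ends, so consecutive sectors at $v$ have equal parity. The face across each boundary edge then links the values at adjacent boundary vertices.

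With this lemma, the argument runs as follows:
\begin{itemize}
\item In a closed word, choose a subword $fUf$ with $f\notin U$, with some face meeting $f$ absent from $U$, and with $|U|$ minimal.
\item Such a subword exists because the excluded face and dual connectivity give an occurring face adjacent to a non-occurring one.
\item By propagation, every face meeting $f$ has even parity in $U$. So if a letter $g$ of $U$ met $f$, it would occur twice, giving a shorter subword $gVg$ in which $f$ is an absent face meeting $g$. This contradicts minimality.
\item Hence every letter of $U$ is disjoint from $f$. The first $f$ commutes across $U$, the pair $ff$ cancels, and induction on length contracts every edge loop.
\end{itemize}
This is also where properness of $\F$ is used essentially, not merely to pin a base face for the heights.
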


The proof uses the cube complex whose cubes record simultaneous flips of vertex-disjoint alternating faces. This construction belongs to Turaev's theory of matching complexes \cite{Turaev}. Its local nonpositive curvature follows from the independence of disjoint flips. The main point is to prove simple connectedness. A parity relation at a boundary vertex propagates around an alternating face. Applied to a shortest suitable interval in a closed flip word, this relation produces a pair of equal flips that can be brought together and cancelled. Repeating this operation contracts every closed edge path. The cubical characterization of median graphs \cite{Chepoi} then applies.

There is also a metric refinement of the face quotients introduced by Tratnik and Ye. Fix a component $H$ and a face $f\in\F$. Let $E_f(H)$ consist of the edges of $H$ obtained by flipping $f$. Define $T_f$ by contracting every component of $H-E_f(H)$ to a vertex and retaining the edges induced by $E_f(H)$, with parallel edges replaced by one edge. Let $q_f$ send a matching to the component containing it. The proofs below show that face labels are unique and that no loop occurs in this quotient.
\begin{theorem}\label{thm:trees}
Under the hypotheses of Theorem~\ref{thm:main}, every $T_f$ is a tree, and the map
\[
 q:V(H)\longrightarrow\prod_{f\in\F}V(T_f),\qquad
 q(M)=(q_f(M))_{f\in\F},
\]
is an isometric embedding into the Cartesian product of the trees $T_f$.
\end{theorem}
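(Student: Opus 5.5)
The plan is to derive Theorem~\ref{thm:trees} from the median structure of Theorem~\ref{thm:main} together with the theory of $\Theta$-classes (Djokovi\'c--Winkler classes) of median graphs. In a median graph $H$, the relation $\Theta$ is an equivalence on edges. Removing any single $\Theta$-class leaves exactly two convex components (halfspaces). The natural map sending a vertex to its halfspace choices is an isometric embedding into the hypercube indexed by the classes. I will show that each $E_f(H)$ is a union of $\Theta$-classes, and that the $\Theta$-classes contained in $E_f(H)$ are pairwise \emph{nested}, never crossing. Then $T_f$ is the quotient of $H$ by all classes outside $E_f(H)$. Such a quotient is again median, and it is a tree exactly when no two of its remaining classes cross.

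\textbf{Step 1 (face labels are unique and constant on $\Theta$-classes).} Every edge of $H$ has a unique face label: if $M\triangle N=E(f)=E(g)$, then $f=g$, since distinct even faces have distinct simple boundary cycles. In the cube complex used for Theorem~\ref{thm:main}, opposite edges of a square flip the same face, because a square is a pair of commuting flips of vertex-disjoint faces. In a median graph, $\Theta$ is the transitive closure of the opposite-edge relation in squares. Hence every $\Theta$-class carries a single face label, and $E_f(H)$ is a union of $\Theta$-classes.

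\textbf{Step 2 (classes with the same label do not cross).} Two $\Theta$-classes cross exactly when some square contains edges of both. A square uses two vertex-disjoint faces, so its two classes have different labels. Consequently two classes with label $f$ never cross. Now contract all edges outside $E_f(H)$. This produces $T_f$, and contracting the classes of a median graph yields a median graph whose classes are the surviving ones. A median graph in which no two classes cross contains no square. A median graph with no square is a tree, since every isometric cycle of a median graph is a $4$-cycle. I must also rule out loops and parallel edges beyond those identified. A loop would mean that the two endpoints of an $f$-edge lie in one component of $H-E_f(H)$. This is impossible, because the $\Theta$-class of that edge separates its endpoints and the class consists of $f$-edges only. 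Parallel edges between two components all lie in a single class, because each such class is exactly one cut of the quotient.

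\textbf{Step 3 (isometry).} For $M,N\in V(H)$, $d_H(M,N)$ equals the number of $\Theta$-classes separating $M$ and $N$. Group these classes by face label. The classes with label $f$ separating $M$ and $N$ correspond bijectively to the edges of $T_f$ on the path from $q_f(M)$ to $q_f(N)$. Hence $d_{T_f}(q_f(M),q_f(N))$ counts them, and summing over $f\in\F$ gives the Cartesian product distance. Distance equality forces $q$ to be injective.

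\textbf{Main obstacle.} I expect the hardest point to be Step 1's assertion that $\Theta$ is generated by square-opposite edges with unchanged face label. I will invoke the standard fact that in a median graph $\Theta$ is the transitive closure of the square-opposite relation. I will also check that in the resonance graph every $4$-cycle really arises from two disjoint commuting flips. For this, suppose $M\triangle N=E(f)$ and $N\triangle P=E(g)$ lie on a $4$-cycle. If $E(f)\cap E(g)\neq\emptyset$, the symmetric differences around the cycle force the fourth edge to be a flip whose boundary $E(f)\triangle E(g)$ is not a single face cycle. Ruling this out uses the simplicity of the face boundaries and the choice $\F\subsetneq\F(G)$. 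If this argument is delicate, the fallback is to cite the corresponding lemma from the proof of Theorem~\ref{thm:main}.
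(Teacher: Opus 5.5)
Your route is correct in outline, but it differs from the paper's. The paper never works with $4$-cycles of $H$ or with $\Theta$-classes. It uses hyperplanes of the cube complex $X_H$ from Theorem~\ref{thm:cat0}. Squares of $X_H$ are by construction flips of two disjoint faces, so a hyperplane automatically carries a single label. The paper then proves two things by hand:
\begin{itemize}
\item The edges of $T_f$ correspond bijectively to the $f$-labelled hyperplanes. In each square of a ladder the non-$f$ sides keep the two $f$-edges between the same pair of components. Conversely, paths in $H-E_f(H)$ cross no $f$-hyperplane.
\item $T_f$ has no cycle. A cycle would lift to a closed walk crossing some hyperplane exactly once.
\end{itemize}
You instead stay inside the median graph of Theorem~\ref{thm:main} and hand the quotient analysis to the standard $\Theta$-contraction theory. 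Non-crossing equally labelled classes then give a square-free median quotient, which is a tree. This explains why trees appear: they are what non-crossing produces. The cost is that you cite the contraction theorem and use that crossing is preserved under contraction, which you should state. It holds because the halfspaces of a surviving class are unions of components of $H-E_f(H)$, so nonemptiness of quadrants is unchanged. You also need the label pattern on every $4$-cycle of the graph $H$, since $\Theta$ is generated by graph $4$-cycles. The paper avoids this because its hyperplanes are defined through squares of $X_H$.

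Two points in your write-up need repair.

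First, the reason you give for unique labels is false in general. A $4$-cycle embedded alone in the sphere has two distinct even faces with the same boundary. Uniqueness holds because $\F$ is proper, via Lemma~\ref{lem:independence} applied to $B=\{f,g\}$.

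Second, your ``main obstacle'' is not an obstacle, and your sketch of it is garbled. The set $E(f)\triangle E(g)$ is the diagonal $M\triangle P$, not the fourth edge. The flip word of a $4$-cycle of $H$ is closed of length four, and cyclically consecutive labels are distinct because the four matchings are distinct. Then \eqref{eq:closedparity} forces the pattern $f,g,f,g$ with $f\ne g$. This gives Step~1 (opposite edges share a label) and Step~2 (the two classes of a square have different labels) at once. Vertex-disjointness of $f$ and $g$ is true but not needed for your argument.
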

The Cartesian product has the usual graph structure: an edge changes one coordinate along an edge of its factor. Its distance is the sum of the factor distances. An unused face gives a one-vertex factor; an empty product is a one-vertex graph. Theorem~\ref{thm:trees} strengthens the bipartiteness of the face quotients proved in \cite{TratnikYe}. It also explains why a single binary coordinate for each face can lose distance information.

Section~\ref{sec:parity} establishes the topological and parity facts, including the point that permits arbitrary embeddings. Section~\ref{sec:words} proves cancellation of closed flip words. Section~\ref{sec:cubes} constructs the cube complex and proves Theorem~\ref{thm:main}. Section~\ref{sec:metric} proves Theorem~\ref{thm:trees}, characterizes isometric facial parity embeddings, and records the fullerene consequence and the necessity of excluding at least one face.

\section{Facial incidences and parity}\label{sec:parity}
We first justify the local incidence properties needed below. This avoids imposing restrictions on faces that are never flipped. The \emph{face dual} has one vertex for each face and one edge for each edge of $G$, joining the regions on its two local sides. These regions may coincide, in which case the dual edge is a loop. A \emph{sector} at a vertex is a component of a sufficiently small punctured neighbourhood between consecutive incident half-edges.

\begin{lemma}\label{lem:incidence}
The face dual is connected. If $|\F(G)|\geq2$ and a face $f$ has a simple cycle as its boundary, then $f$ occupies exactly one sector at each boundary vertex and exactly one local side of each boundary edge.
\end{lemma}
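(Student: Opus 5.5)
Connectedness of the face dual comes from the connectedness of $\Sigma$. Every point of $\Sigma$ lies either in a face or on $G$. Each point of $G$ has a small disc neighbourhood, and that disc meets only the faces incident with the local sides or sectors at the point. Define an equivalence relation on faces: two faces are related when they are joined by a path in the face dual. For a class $C$, let $U_C$ be the union of the faces in $C$ together with the open edges and vertices of $G$ all of whose local sides and sectors lie in faces of $C$. Every point of $G$ has all its local regions in a single class. For an interior point of an edge this is immediate, since its two sides are joined by the dual edge. At a vertex, consecutive sectors are separated by a half-edge whose dual edge joins them, so all sectors at the vertex lie in one class. It follows that the sets $U_C$ are open, pairwise disjoint and cover $\Sigma$. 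Since $\Sigma$ is connected, there is only one class.

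For the second claim, let $f$ be a face whose frontier $\partial f$ is a simple cycle $C$, and suppose $|\F(G)|\ge2$. The cycle $C$ is a simple closed curve in $\Sigma$, and a small closed regular neighbourhood $N$ of $C$ is an annulus or a Möbius band. The set $N\setminus C$ has at most two components, the local sides of $C$; in the Möbius case there is a single side. The key observation is that $f$ is connected, open and disjoint from $G$, and that $f\cap N$ cannot meet the edges of $G$ inside $N$ other than those of $C$. Choose $N$ small enough that the only pieces of $G$ in $N$ are $C$ itself and short initial segments of the other edges at vertices of $C$ (the chords). I would then argue as follows.

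If $f$ met both sides of some edge $e\in E(C)$, then the dual edge of $e$ would be a loop at $f$. Every other edge of $C$ would also have $f$ on a side. Because $f$ is disjoint from $G$ and its frontier is exactly $C$, the whole of $f$ near $C$ would fill a neighbourhood of $C$ minus $C$. In the annulus case both sides would be $f$; in the Möbius case the unique side would be $f$. Then $f\cup C$ contains a neighbourhood of each point of $C$, so $f\cup C$ is open. It is also closed, because the closure of $f$ is $f\cup\partial f=f\cup C$. Connectedness of $\Sigma$ then gives $\Sigma=f\cup C$, and hence $\F(G)=\{f\}$, contradicting $|\F(G)|\ge2$. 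The same argument at a vertex shows the sector claim. If $f$ occupied two sectors at a vertex $v$ of $C$, then, since $v$ lies on exactly two edges of $C$, the frontier of $f$ near $v$ must consist only of those two half-edges. Any additional half-edge at $v$ bounding a sector of $f$ would lie in $\partial f$ and not in $C$, which is impossible. So the sectors of $f$ at $v$ are separated only by the two cycle half-edges. Two such sectors then put $f$ on both sides of those edges, which reduces to the edge case just treated.

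The main obstacle is the step from local to global. One must show that if $f$ lies on both sides of one edge, then it lies on both sides of every edge of $C$, or more precisely that the union of the sides of $C$ met by $f$ forms an entire side-class of $N\setminus C$. I would handle this by noting that the side of $C$ containing $f$ is locally constant along $C$. At each vertex, only chords could separate regions, and a chord lying inside $f$'s region would belong to $\partial f$, which is excluded. Hence membership in $f$ propagates around $C$ within each component of $N\setminus C$. Beyond this, I would only need to check that "exactly one" is meaningful, that is, that $f$ occupies at least one sector and one side. This holds because every point of $C$ lies in $\partial f$ and is therefore approached by $f$.
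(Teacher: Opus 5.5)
Your proposal is correct. For the second claim it uses the same idea as the paper. A sector of $f$ at a vertex $v$ of $C$ can only be bounded by the two half-edges of $C$ at $v$, since a chord bounding it would lie in $\partial f\setminus C$. Hence two sectors of $f$ at $v$ force degree two and put $f$ on both sides of the incident cycle edges, and this propagates around $C$.

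The differences lie in the two global steps.

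\begin{itemize}
\item \emph{Connectivity of the face dual.} The paper joins two faces by an arc in general position and reads a dual walk off its edge crossings. You instead partition $\Sigma$ into the open sets $U_C$ indexed by dual components and invoke connectedness of $\Sigma$. Your version avoids any transversality argument and covers disconnected $G$ and isolated vertices without extra comment.
\item \emph{Final contradiction.} The paper reuses the first part: every dual edge at $f$ is a loop, so $f$ is isolated in a connected dual and is therefore the only face. You show directly that $f\cup C$ is open and closed in $\Sigma$, which in effect repeats the connectivity argument.
\end{itemize}

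The regular neighbourhood $N$ and the split into annulus and M\"obius cases are not needed. Once the propagation shows that every vertex of $C$ has degree two with $f$ in both sectors, each point of $C$ already has a disc neighbourhood inside $f\cup C$. Working only with sectors and edge sides, as the paper does, also spares you the tameness of $C$ and the one-sided bookkeeping. In fact your own propagation shows that the M\"obius case cannot occur when $|\F(G)|\ge 2$. There, $f$ lies on some local side, that side's component is all of $N\setminus C$, and so $f$ is on both sides of every edge.
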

\begin{proof}
Choose interior points in two faces. An arc between them in the connected surface can be chosen to avoid the finitely many vertices and to cross the edges transversely at finitely many interior points. The successive complementary regions traversed by this arc give a walk in the face dual. This also applies when $G$ is disconnected.

Now let $C=\partial f$ be a simple cycle. At a vertex $v$ of $C$, every sector belonging to $f$ must be bounded by the two half-edges of $C$ at $v$: any other bounding half-edge would put an edge outside $C$ in the frontier of $f$. At least one such sector exists. If there are two, then $v$ has degree two and both sectors belong to $f$. Both sides of each edge of $C$ at $v$ therefore belong to $f$. The same argument at the next vertex of $C$ forces degree two and two sectors of $f$ there as well. Continuing around $C$ shows that every edge in its boundary has $f$ on both sides. Since $\partial f=C$, all dual edges incident with $f$ would be loops. Dual connectivity would then imply that $f$ is the only face, a contradiction. Thus there is exactly one sector of $f$ at every boundary vertex. The assertion about edge sides follows.
\end{proof}

Unless stated otherwise, henceforth $\F$ is a nonempty proper set of even faces. The cases with no allowed face or no perfect matching are immediate. For $B\subseteq\F$, define
\[
 \partial_2 B=\mathop{\triangle}_{f\in B}E(f),
\]
with $\partial_2\varnothing=\varnothing$. This notation denotes an edge set, with addition interpreted modulo two. We write $\FF=\{0,1\}$ for the field with two elements. The next lemma is the independence form of the parity argument of Tratnik and Ye \cite[preprint, Lemma~3.1]{TratnikYe}.

\begin{lemma}\label{lem:independence}
If $B\subseteq\F$ and $\partial_2 B=\varnothing$, then $B=\varnothing$.
\end{lemma}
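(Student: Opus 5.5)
The plan is to argue by propagation through the face dual, using that $\F$ is a proper subset of $\F(G)$. Suppose $B\neq\varnothing$ and $\partial_2 B=\varnothing$. For each edge $e$ of $G$, count (with multiplicity over local sides) the faces of $B$ lying on the two local sides of $e$. Lemma~\ref{lem:incidence} applies because $|\F(G)|\ge 2$, which holds since $\F$ is nonempty and proper. Thus each $f\in B$ occupies exactly one local side of each edge of $E(f)$, and no side of any other edge. Hence the number of $f\in B$ with $e\in E(f)$ equals the number of local sides of $e$ that belong to faces of $B$. The condition $\partial_2B=\varnothing$ says this number is even, so it is $0$ or $2$.

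We then read this in the face dual. If exactly one side of $e$ lies in a face of $B$, the count is $1$, which is impossible. So for every dual edge, either both endpoints lie in $B$ or neither does. A non-loop dual edge joining $f\in B$ to $g$ would have $f$ on exactly one side, and so would force $g\in B$. The case $g=f$ is the loop case, but a face of $B$ cannot appear on both sides of an edge, by Lemma~\ref{lem:incidence}. More precisely, the two sides of $e$ are counted separately. If one side is in $f\in B$ and the other side is in a face $g\notin B$, the count is odd. Therefore the set of faces in $B$ is closed under dual adjacency.

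By Lemma~\ref{lem:incidence} the face dual is connected, so $B=\F(G)$. But $B\subseteq\F\subsetneq\F(G)$, which is a contradiction. Hence $B=\varnothing$.

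The only delicate step is the bookkeeping that the number of $B$-faces containing $e$ in their boundary equals the number of sides of $e$ occupied by $B$. This needs both parts of Lemma~\ref{lem:incidence}: each even face lies on exactly one side of each of its boundary edges, and a face not containing $e$ in its frontier occupies no side of $e$. The second part is immediate from the definition of frontier. I expect this accounting, especially ruling out a $B$-face occupying both sides of an edge, to be the main point, and Lemma~\ref{lem:incidence} handles it.
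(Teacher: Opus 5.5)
Your proposal is correct and follows essentially the same route as the paper's proof. Both use Lemma~\ref{lem:incidence} to show that every non-loop dual edge joins two faces that are either both in $B$ or both outside $B$, and then use connectedness of the face dual together with a face outside $\F$. The only difference is packaging: the paper shows that the indicator $c$ is constant and hence zero, while you show that $B$ is closed under dual adjacency and derive a contradiction from $B=\F(G)$.
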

\begin{proof}
Assign a value $c(h)\in\FF$ to each face $h$, equal to one for $h\in B$ and zero otherwise. Let an edge $e$ have faces $a,b$ on its two local sides. If $a\ne b$, Lemma~\ref{lem:incidence} implies that the coefficient of $e$ in $\partial_2 B$ is $c(a)+c(b)$. It is zero by assumption, so $c(a)=c(b)$. If the two faces coincide, neither occurrence can be a selected face by Lemma~\ref{lem:incidence}, and the same equality is automatic. Connectedness of the face dual makes the values constant. A face outside $\F$ has value zero; hence every value is zero.
\end{proof}

In particular, distinct faces in $\F$ have distinct boundary edge sets. Thus every edge of $R(G;\F)$ has a unique face label; no convention choosing representative labels is needed.

A \emph{flip word} $w=f_1\cdots f_t$ records a walk $M_0M_1\cdots M_t$ in $R(G;\F)$, with $M_i=M_{i-1}\triangle E(f_i)$. For every face $h\in\F(G)$, write $\varepsilon_w(h)\in\FF$ for its number of occurrences modulo two. Faces outside $\F$ have value zero. If the walk is closed, its total symmetric difference is empty. Lemma~\ref{lem:independence}, applied to the faces with odd occurrence count, gives
\begin{equation}\label{eq:closedparity}
 \varepsilon_w(h)=0\qquad(h\in\F(G)).
\end{equation}
Thus every label in a nonempty closed word occurs at least twice. This extends the cycle formulation in \cite{TratnikYe} to closed walks with repetitions.

Two distinct faces \emph{meet} if their frontiers contain a common vertex of $G$. Allowed faces are \emph{disjoint} when their boundary cycles are vertex-disjoint. Flips of disjoint faces commute: flipping one changes no edge incident with a boundary vertex of the other. In particular, replacing a legal segment $fg$ by $gf$ preserves legality and its endpoints whenever $f$ and $g$ are disjoint.

The following parity statement is the main local observation.
\begin{lemma}\label{lem:propagation}
Let $f\in\F$, and let $u$ be the flip word of a walk from $N$ to $N'$ that does not flip $f$. If $f$ is alternating with respect to both $N$ and $N'$, then $\varepsilon_u(h)$ has the same value for all faces $h\ne f$ meeting $f$.
\end{lemma}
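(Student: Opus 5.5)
We compare $N$ and $N'$ along the boundary cycle $C=\partial f$. Since $u$ does not flip $f$, we have $N\triangle N'=\partial_2 B$, where $B=\{h\in\F:\varepsilon_u(h)=1\}$ and $f\notin B$. Both $N$ and $N'$ restrict to perfect matchings on $V(f)$ that alternate along $C$. So on $E(f)$, $N\cap E(f)$ is one of the two alternating classes of $C$, and likewise for $N'$. Hence the indicator of $N\triangle N'$ on $E(f)$ is either identically $0$ or identically $1$, because the two alternating classes partition $E(f)$. Call this constant $\delta\in\FF$.

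Next we translate this into conditions on the faces on the other side of $C$. By Lemma~\ref{lem:incidence}, each edge $e\in E(f)$ has $f$ on exactly one local side, with some face $a(e)\ne f$ on the other side. Writing $c=\mathbf 1_B$ (so $c(f)=0$), the coefficient of $e$ in $\partial_2B$ is $c(f)+c(a(e))=c(a(e))$. We also need that no face of $B$ lies on both sides of $e$; this holds by Lemma~\ref{lem:incidence} applied to that face, which is even. Therefore $c(a(e))=\delta$ for every edge $e$ of $C$.

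The remaining work, and the main obstacle, is to handle faces $h$ that meet $f$ only at a vertex $v$ of $C$ without sharing an edge. We use the rotation at $v$. By Lemma~\ref{lem:incidence}, $f$ occupies exactly one sector at $v$, bounded by the two $C$-edges $e_1,e_2$ at $v$. Go around $v$ from $e_1$ to $e_2$ through the other sectors. Consecutive sectors are separated by a non-$C$ edge $e'$ at $v$. Edge $e'$ is not in $E(f)$. For the vertex $v$ the matching edge changes at most in a controlled way. More precisely, $N\triangle N'$ restricted to edges at $v$ has even size (it is $0$ or $2$, since both are perfect matchings). Its edges at $v$ on $C$ number $2\delta$, so no non-$C$ edge at $v$ lies in $N\triangle N'$ unless $\delta$ would otherwise be forced. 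In fact, if $\delta=1$ both $C$-edges at $v$ are already in the difference, which forces all other edges at $v$ out of it. If $\delta=0$, then $N$ and $N'$ share the $C$-edge at $v$ that lies in the matching, so again no other edge at $v$ is in the difference.

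Thus each non-$C$ edge $e'$ at $v$ has coefficient $0$ in $\partial_2B$, giving $c$-equality across it for the two sectors it separates. When the two sides of $e'$ are the same face, the equality is trivial. Propagating through the sectors from $e_1$ to $e_2$ shows that every face occupying a sector at $v$ other than $f$ has $c$-value equal to $c(a(e_1))=\delta$. Since every face meeting $f$ occupies some sector at some vertex of $C$, all such $h$ satisfy $\varepsilon_u(h)=\delta$, which proves the lemma.

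The delicate points are the local-side bookkeeping for non-cellular embeddings and loops in the dual. Both are covered by Lemma~\ref{lem:incidence}, and I would check them carefully.
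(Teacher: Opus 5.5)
Your proof is correct and follows essentially the paper's argument. You propagate parity sector by sector at each boundary vertex $v$, using that the edges at $v$ off $\partial f$ lie in neither $N$ nor $N'$. The paper reads this off directly from alternation; your case analysis on $\delta$ reaches the same conclusion, though the sentence ending ``unless $\delta$ would otherwise be forced'' is garbled.

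The only real difference is how the local values are made global. The paper chains $c_v=c_{v'}$ through the face across each boundary edge. You instead anchor every local value to the constant $\delta$ on $E(f)$, which also identifies the common parity as the indicator that $N$ and $N'$ differ on $\partial f$.
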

\begin{proof}
Fix $v\in V(f)$. By Lemma~\ref{lem:incidence}, the sector of $f$ at $v$ is unique. List the incident half-edges cyclically as $e_0,e_1,\ldots,e_{d-1}$ so that $e_0,e_{d-1}$ bound this sector. For $1\leq i\leq d-1$, let $r_i$ be the face in the sector between $e_{i-1}$ and $e_i$, and put $\eta_i=\varepsilon_u(r_i)$. A face may occupy several sectors, all with the same parity.

For each intermediate edge $e_i$, $1\leq i\leq d-2$, the parity of its change in membership between $N$ and $N'$ is $\eta_i+\eta_{i+1}$. This formula remains valid if both sectors belong to the same face: that face cannot be allowed, by Lemma~\ref{lem:incidence}, and its parity is zero. Since $f$ is alternating at both endpoints, the matching edge at $v$ belongs to $\{e_0,e_{d-1}\}$ in both $N$ and $N'$. Every intermediate edge therefore belongs to neither matching. Hence $\eta_i=\eta_{i+1}$ for all $1\leq i\leq d-2$. For $d=2$ there is only one $\eta_i$. Thus all faces other than $f$ incident with $v$ have a common parity $c_v$; see Figure~\ref{fig:sectors}.

If $vv'$ is a boundary edge of $f$, the face $r$ on its other side is distinct from $f$ by Lemma~\ref{lem:incidence}. It is incident with both endpoints, giving $c_v=\varepsilon_u(r)=c_{v'}$. Connectedness of $\partial f$ now makes all the local values equal.
\end{proof}

\begin{figure}[tb]
\centering
\begin{tikzpicture}[scale=1.05,line cap=round,font=\small]
\fill[gray!15] (0,0)--(210:1.65) arc[start angle=210,end angle=330,radius=1.65]--cycle;
\foreach \a in {210,150,90,30,-30}{\draw (0,0)--(\a:2);}
\fill (0,0) circle (1.8pt);
\node[above right] at (0.03,0.03) {$v$};
\node[below] at (270:1.2) {$f$};
\node[left] at (210:2) {$e_0$};
\node[left] at (150:2) {$e_1$};
\node[above] at (90:2) {$e_2$};
\node[right] at (30:2) {$e_3$};
\node[right] at (-30:2) {$e_4$};
\foreach \a/\s in {180/1,120/2,60/3,0/4}{\node at (\a:1.17) {$r_{\s}$};}
\node[align=left,anchor=west] at (2.8,0.35) {$e_1,e_2,e_3\notin N\cup N'$};
\node[align=left,anchor=west] at (2.8,-0.3) {$\eta_1=\eta_2=\eta_3=\eta_4$};
\end{tikzpicture}
\caption{The local parity argument at a vertex of degree five. Only the two boundary edges of $f$ can match $v$ at either endpoint of the walk. The unshaded sectors may belong to repeated or noncellular faces.}\label{fig:sectors}
\end{figure}

\section{Cancellation of closed flip words}\label{sec:words}
Two elementary operations on legal flip words will suffice: deleting $ff$, and interchanging consecutive labels of disjoint faces. The first deletes an immediate reversal of a flip; the second preserves the endpoints by commutation. Words below are ordinary linear words with a fixed initial matching, and $|U|$ denotes the number of letters in a word $U$.

\Needspace{8\baselineskip}
\begin{proposition}\label{prop:cancellation}
Every closed flip word can be reduced to the empty word by interchanging consecutive disjoint flips and deleting consecutive equal flips.
\end{proposition}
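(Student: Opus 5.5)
Induction on the length of the closed word $w=f_1\cdots f_t$, with the empty word as base case. By \eqref{eq:closedparity} every label occurring in $w$ occurs at least twice. It therefore suffices to show that a nonempty closed word can be transformed, using only legal interchanges of consecutive disjoint flips, into one containing a factor $ff$. Deleting that factor gives a shorter closed word with the same endpoints, and the induction hypothesis finishes.

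To find the factor, consider all pairs $i<j$ with $f_i=f_j=f$ and no occurrence of $f$ strictly between them. Among these, choose one minimizing $j-i$, and write $w=A\,f\,u\,f\,B$ with $f\notin u$. If $u$ is empty we are done. Otherwise, let $N$ be the matching after the first $f$ and $N'$ the matching before the second $f$. Then $f$ is alternating for both, since the same face remains alternating after a flip and the second flip of $f$ is legal. So Lemma~\ref{lem:propagation} applies to $u$: all faces $h\neq f$ meeting $f$ have a common parity $c$ in $u$.

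The goal is to push letters of $u$ that are disjoint from $f$ out of the interval, to the left past the first $f$ or to the right past the second. First suppose $c=1$. Then every face meeting $f$ occurs in $u$ an odd number of times. The first letter of $u$ meeting $f$, say $g$, must occur again in $u$, so we obtain a shorter pair of equal labels $g\cdots g$ inside $u$. This contradicts minimality, provided we minimized over all labels; we do, since the choice ranged over all pairs. Hence $c=0$. But the first letter $g$ of $u$ meeting $f$ (if any) still occurs an even number of times, so it appears again inside $u$. That again gives a closer pair of equal labels, contradicting minimality. Therefore no letter of $u$ meets $f$; all letters of $u$ are disjoint from $f$. (If $g$ equals a face that shares a vertex only, it meets $f$ by definition, so it is covered by this case.)

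Once every letter of $u$ is disjoint from $f$, the first $f$ can be commuted rightward past each letter of $u$. Each step is a legal interchange of consecutive disjoint flips, and it preserves legality and endpoints. This produces the factor $ff$, completing the inductive step.

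The main obstacle is the minimality argument in the third paragraph: it must be checked that a repeated letter inside $u$ really gives a strictly shorter pair of consecutive equal occurrences. It does, since a pair inside $u$ has gap less than $j-i$. It must also be checked that Lemma~\ref{lem:propagation}'s hypothesis (a walk not flipping $f$) holds for $u$, which is why the pair is chosen with no intermediate $f$.

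It is worth noting that the argument then barely uses the parity lemma: minimality alone forces $u$ to consist of distinct letters, each of odd (namely one) occurrence. Lemma~\ref{lem:propagation} then gives all faces meeting $f$ the same parity $c$. If $c=1$, every face meeting $f$ appears exactly once in $u$. To exclude this I would use the structure at a boundary vertex: only boundary-adjacent faces can change the matching edge at $v$, and $f$ alternating at both ends with distinct single flips should give a contradiction. This case $c=1$ is the point I expect to require the most care, and I would handle it by examining the first flip in $u$ of a face meeting $f$ and checking that it destroys $f$'s alternation irreversibly.
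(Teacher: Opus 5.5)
There is a genuine gap, and it is exactly the case $c=1$ that you defer to the end. In your third paragraph, ``every face meeting $f$ occurs an odd number of times, so the first such letter $g$ must occur again in $u$'' does not follow, since an odd count can be $1$. Under your minimization over all pairs, $u$ has distinct letters. So $c=1$ means that every face meeting $f$ occurs exactly once in $u$, and this genuinely happens.

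Take coronene, let $f$ be the central hexagon and $h_1,\dots,h_6$ the surrounding hexagons in cyclic order. Start from a suitable one of the Kekul\'e structures in which $f$ and the $18$-cycle perimeter are both alternating. Then $f\,h_1h_3h_5\,h_2h_4h_6\,f$ is legal:
\begin{itemize}
\item after the first $f$, the pairwise disjoint faces $h_1,h_3,h_5$ are alternating;
\item flipping them puts the six edges leaving $V(f)$ into the matching and makes $h_2,h_4,h_6$ alternating;
\item flipping those makes $f$ alternating again, in the other class.
\end{itemize}
So the first flip of a face meeting $f$ does not destroy the alternation of $f$ irreversibly, and your proposed repair cannot work.

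Global gap-minimization also selects such a pair. Add seven hexagons $k_1,\dots,k_7$ in the outer face, disjoint from the coronene; the outer region is then not an allowed face. The closed word
\[
f\,h_1h_3h_5h_2h_4h_6\,f\,k_1\cdots k_7\,f\,h_2h_4h_6h_1h_3h_5\,f\,k_1\cdots k_7
\]
has smallest gap $6$, attained only by the two $f$-pairs whose interiors consist entirely of faces meeting $f$. So a globally closest pair of equal labels need not have its interior disjoint from $f$.

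The missing idea is to minimize over a restricted class of pairs that is inherited by inner pairs. This is where the paper uses $\F\subsetneq\F(G)$ a second time; your argument uses properness only through \eqref{eq:closedparity}.
\begin{enumerate}
\item Consider only subwords $fUf$ with $f\notin U$ such that some face $r\neq f$ meeting $f$ is absent from $U$.
\item Such a subword exists. The label set $A$ of $w$ is a nonempty proper subset of $\F(G)$, so dual connectivity gives $p\in A$ sharing an edge with a face $r\notin A$. Two consecutive occurrences of $p$ then qualify.
\item Take a qualifying pair with $|U|$ minimal. The absent face has parity $0$ in $U$, so Lemma~\ref{lem:propagation} forces $c=0$.
\item Any $g$ in $U$ meeting $f$ therefore occurs at least twice in $U$. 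Two consecutive occurrences $gVg$ qualify again, because $f$ meets $g$ and is absent from $V$. This contradicts minimality.
\end{enumerate}
Your final commuting and cancelling step then goes through unchanged.
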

\begin{proof}
Let $w$ be a nonempty closed word, and let $A\subseteq\F$ be the set of labels occurring in it. Because the dual is connected and $A$ is nonempty and proper in $\F(G)$, some $p\in A$ shares a boundary edge with a face $r\notin A$. By \eqref{eq:closedparity}, $p$ occurs at least twice. Two consecutive occurrences, in the linear order of $w$, give a contiguous subword $pUp$ with no $p$ in $U$. The face $r$ meets $p$ and is absent from $U$.

Among all contiguous subwords $fUf$ of $w$ satisfying the following two conditions, choose one with $|U|$ smallest: the label $f$ is absent from $U$, and some face $r\ne f$ meeting $f$ is absent from $U$. The preceding paragraph proves that such a choice exists. We claim that every label in $U$ is disjoint from $f$.

The face $f$ is alternating immediately after the first displayed flip and immediately before the second. Lemma~\ref{lem:propagation} applies to $U$. Since the absent face $r$ has parity zero, every face meeting $f$ has parity zero in $U$. If a label $g$ occurring in $U$ met $f$, it would therefore occur at least twice in $U$. Two consecutive such occurrences would give $gVg$ as a contiguous subword of $U$. The word $V$ contains neither $g$ nor $f$, and $f$ meets $g$. Thus $gVg$ satisfies the same two conditions and $|V|<|U|$, contradicting the choice of $U$. This proves the claim.

Commute the first $f$ past all letters of $U$, and delete the resulting pair $ff$. Every interchange is legal, and the result is a closed word shorter by two. Induction on the length of $w$ completes the reduction.
\end{proof}

The argument does not assume that a loop is already reduced and does not change its basepoint. In particular, no assertion about simple connectedness is used to obtain cancellation.

\section{The cube complex and medianity}\label{sec:cubes}
Fix a component $H$ of $R(G;\F)$ and a matching $M_*\in V(H)$. For $M\in V(H)$, a walk from $M_*$ to $M$ expresses $M\triangle M_*$ as $\partial_2 B$ for a subset $B\subseteq\F$. Lemma~\ref{lem:independence} makes this subset unique. Define $x(M)\in\{0,1\}^{\F}$ to be its characteristic vector, with $x_f(M)$ its entry at $f$, so that
\begin{equation}\label{eq:coordinates}
 M=M_*\triangle\mathop{\triangle}_{\substack{f\in\F\\x_f(M)=1}}E(f).
\end{equation}
The map $x$ is injective. An edge labelled $f$ changes exactly coordinate $f$. Conversely, if two vectors $x(M),x(N)$ differ in exactly coordinate $f$, equation~\eqref{eq:coordinates} gives $M\triangle N=E(f)$, so $M,N$ are adjacent. This is the facial parity realization of the induced hypercube embedding in \cite{TratnikYe}. It is not assumed to preserve distances.

Suppose $S\subseteq\F$ consists of pairwise disjoint faces, each alternating with respect to $M$. Flipping any subset $T\subseteq S$ gives the matching $M\triangle\partial_2T$. In parity coordinates these matchings are exactly the vertices of a face of the Euclidean cube $[0,1]^{\F}$, with variable coordinates $S$ and the other coordinates fixed at $x(M)$. Let $X_H$ be the union of all these faces, including the zero-dimensional ones, as $M$ and $S$ vary. Every face of an included cube is again included. Hence $X_H$ is a finite cube subcomplex of $[0,1]^{\F}$. Cubes are embedded and meet in common faces by this construction. Its one-skeleton is $H$ under the map $x$.

This is the facial version of Turaev's matching complex \cite{Turaev}, whose cubes use vertex-disjoint alternating even cycles. We give the short link verification to make clear that restricting to the specified facial cycles preserves the required local property. The link $\Lk_{X_H}(M)$ has one vertex for each edge at $M$, and a set of link vertices spans a simplex when those edges lie in a common cube. A simplicial complex is \emph{flag} if every finite set of pairwise adjacent vertices spans a simplex.

\begin{lemma}\label{lem:flag}
Every vertex link of $X_H$ is flag.
\end{lemma}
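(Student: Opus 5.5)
The plan is to identify the link combinatorially and then reduce flagness to a pairwise statement about disjoint faces. An edge of $H$ at $M$ is a flip of a face $f\in\F$ that is $M$-alternating, and by Lemma~\ref{lem:independence} its label is unique. So the vertices of $\Lk_{X_H}(M)$ correspond bijectively to the set $A_M$ of $M$-alternating faces in $\F$.

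The first step is to describe the simplices of the link. A set $T\subseteq A_M$ spans a simplex exactly when the corresponding edges lie in a common cube of $X_H$. Such a cube is given by some base matching $M'$ and a pairwise disjoint family $S$ of $M'$-alternating faces, and it contains $M$. Then $M=M'\triangle\partial_2 T'$ for some $T'\subseteq S$. Flipping disjoint faces does not disturb alternation of the others, so every face of $S$ is $M$-alternating. We may therefore rebase the cube at $M$ with the same $S$. The edges at $M$ in this cube are exactly the flips of faces in $S$, because the coordinates of the cube are $S$ and coordinate labels are unique. It follows that $T$ spans a simplex if and only if $T$ is contained in a pairwise disjoint family of $M$-alternating faces, that is, if and only if $T$ itself is pairwise disjoint. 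The converse direction uses the cube at $M$ with $S=T$.

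The second step is the key point: two link vertices $f,g\in A_M$ are adjacent only if $f$ and $g$ are disjoint. Adjacency means the two edges lie in a common square, so by the first step $\{f,g\}$ is a pairwise disjoint family. Once this is established, flagness is immediate. A set of pairwise adjacent link vertices is pairwise disjoint and consists of $M$-alternating faces. It is therefore some $S$ of the required form, which spans the cube at $M$ and hence a simplex.

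The main obstacle is ruling out "accidental" squares. These would be $4$-cycles in $H$ or cubes in $X_H$ formed by faces that meet, which could make two link vertices adjacent without being disjoint. This is handled by two facts. First, $X_H$ is defined as a union of cubes coming only from disjoint families. Second, the embedding into $[0,1]^{\F}$ determines each cube by its vertex set and its coordinate set $S$. So any square containing the edges labelled $f$ and $g$ at $M$ has coordinate set $\{f,g\}$. That square must equal the cube generated from some base with a disjoint family containing $f$ and $g$. I will verify carefully that this rebasing argument holds, namely that alternation of a face in $S$ is preserved when other faces of $S$ are flipped. This follows because disjoint flips change no edge at the boundary vertices of the other faces.
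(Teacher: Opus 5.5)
Your proof is correct and follows the paper's own argument: you identify link vertices with $M$-alternating faces, show that adjacency forces vertex-disjointness, and observe that a pairwise disjoint family of $M$-alternating faces spans a cube at $M$. Your rebasing step, showing that every face of a cube's generating family stays alternating at each vertex of that cube, makes explicit what the paper compresses into the phrase ``by the construction of $X_H$.''
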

\begin{proof}
At $M$, link vertices correspond to the faces that are $M$-alternating. Two are adjacent precisely when their boundaries are vertex-disjoint, by the construction of $X_H$. A set of pairwise adjacent link vertices therefore consists of pairwise disjoint $M$-alternating faces. Those faces generate a cube, so the set spans a simplex.
\end{proof}

We use the intrinsic piecewise Euclidean metric on the cube complex. For this metric, the link criterion and the Cartan--Hadamard theorem say that a connected finite cube complex is $\CAT$ if it is simply connected and all vertex links are flag; see \cite[Theorems~II.5.20 and II.4.1]{BridsonHaefliger}. The intrinsic metric need not agree with the metric inherited from the ambient Euclidean cube.

\begin{theorem}\label{thm:cat0}
The cube complex $X_H$ is $\CAT$.
\end{theorem}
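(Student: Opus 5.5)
By the link criterion and the Cartan--Hadamard theorem quoted just before the statement, it suffices to prove three things about $X_H$: it is connected, its vertex links are flag, and it is simply connected. Connectedness holds because $H$ is connected and is the one-skeleton of $X_H$. Lemma~\ref{lem:flag} gives the flag condition. The remaining work is to prove that $\pi_1(X_H,M_*)$ is trivial.

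For simple connectedness I use the standard fact that every loop in a cell complex is homotopic, rel basepoint, to an edge loop in the one-skeleton. So it suffices to show that every closed edge path based at $M_*$ is null-homotopic. Such a path is a closed walk in $H$, and it is recorded by a closed flip word $w$ starting at $M_*$. Proposition~\ref{prop:cancellation} reduces $w$ to the empty word by two elementary moves, so it is enough to check that each move changes the edge path only by a homotopy rel endpoints.

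I check the two moves separately.
\begin{itemize}
\item Deleting a consecutive pair $ff$ removes a backtrack $M\to M\triangle E(f)\to M$ along one edge. A backtrack is null-homotopic in any graph, hence in $X_H$.
\item Interchanging consecutive disjoint flips $fg\mapsto gf$ at a matching $M$ replaces one path with another. Since $f$ is legal at $M$, the face $f$ is $M$-alternating. Since $g$ is disjoint from $f$, flipping $f$ does not alter the edges at $V(g)$, so $g$ is also $M$-alternating.
\end{itemize}
In the interchange case, $S=\{f,g\}$ is a set of pairwise disjoint $M$-alternating faces. By the construction of $X_H$ it spans a square with vertices $M$, $M\triangle E(f)$, $M\triangle E(g)$ and $M\triangle E(f)\triangle E(g)$. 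The two paths $fg$ and $gf$ are the two halves of this square's boundary, so they are homotopic rel endpoints across the $2$-cell.

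Composing these homotopies along the reduction sequence shows that the original edge loop is homotopic to the constant loop. Hence $X_H$ is simply connected, and together with Lemma~\ref{lem:flag} and the cited criterion \cite{BridsonHaefliger}, $X_H$ is $\CAT$.

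The only point that needs care is the square identification in the interchange move: both $f$ and $g$ must be alternating at the same vertex $M$, which is exactly what disjointness provides. The substantive combinatorial difficulty, producing a reduction sequence at all, is already settled by Proposition~\ref{prop:cancellation}.
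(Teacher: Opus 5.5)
Your proposal is correct and follows the paper's proof essentially step for step. You reduce to edge loops and apply Proposition~\ref{prop:cancellation}. Deleting $ff$ is a backtrack, and interchanging disjoint $fg$ crosses the square spanned at the initial vertex, where disjointness guarantees both faces are alternating. Your explicit remarks on connectedness and on homotopy rel basepoint are harmless elaborations of the same argument.
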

\begin{proof}
Every loop in a cube complex is homotopic to an edge loop. For an edge loop in $X_H$, Proposition~\ref{prop:cancellation} reduces its flip word to the empty word. Deleting $ff$ removes a backtrack. Interchanging disjoint flips crosses the square generated by those two faces: if a legal segment is $fg$, disjointness implies that both faces were alternating at its initial vertex. Thus every operation is a homotopy with fixed endpoints in $X_H$. Every edge loop is null-homotopic, so $X_H$ is simply connected. Lemma~\ref{lem:flag} and the cubical criterion now apply.
\end{proof}

\noindent\emph{Proof of Theorem~\ref{thm:main}.}
If $\F=\varnothing$, all components are single vertices. Otherwise Theorem~\ref{thm:cat0} applies to every component $H$. The one-skeleton of a $\CAT$ cube complex is a median graph by Chepoi's characterization \cite[Theorem~6.1]{Chepoi}. Since the one-skeleton of $X_H$ is $H$, the result follows.\hfill$\square$

\section{Tree quotients and consequences}\label{sec:metric}
We recall the hyperplane facts used to prove Theorem~\ref{thm:trees}; see Sageev \cite{Sageev} and Chepoi \cite{Chepoi}. In a $\CAT$ cube complex, declare two edges equivalent if they can be joined by a sequence of pairs of opposite edges of squares. Each class is dual to a \emph{hyperplane}. A hyperplane separates the vertices into two halfspaces; an edge has endpoints in different halfspaces precisely when it is dual to that hyperplane. A shortest edge path crosses each hyperplane at most once. Equivalently, the distance between two vertices is the number of hyperplanes separating them.

In $X_H$, opposite edges of a square have the same face label. Thus every hyperplane has a well-defined face label. A label can belong to several hyperplanes, which is why face labels must not be identified with hyperplanes.

\noindent\emph{Proof of Theorem~\ref{thm:trees}.}
Fix $f\in\F$. A path in $H-E_f(H)$ crosses no hyperplane labelled $f$. In particular, the endpoints of an edge labelled $f$ cannot lie in the same component of $H-E_f(H)$. The quotient $T_f$ therefore has no loops. It is connected because $H$ is connected.

We first show that its edges are in bijection with the hyperplanes labelled $f$. Edges dual to the same hyperplane are connected by a sequence of opposite-edge pairs in squares. In each such square, the other two edges have labels different from $f$. The opposite edges consequently join the same pair of components of $H-E_f(H)$, and so define the same quotient edge.

Conversely, let $ab$ and $a'b'$ be edges labelled $f$ that join the same two components, with $a,a'$ in one component and $b,b'$ in the other. If $ab$ is dual to a hyperplane $J$, there are paths from $a$ to $a'$ and from $b$ to $b'$ that cross no $f$-labelled hyperplane. Hence $a,a'$ lie on one side of $J$, and $b,b'$ on the other. The edge $a'b'$ must also be dual to $J$. This proves the bijection.

If $T_f$ had a simple cycle, choose a representative edge of $H$ for each edge of that cycle. Within each contracted component, join the endpoints of successive representatives by a path in $H-E_f(H)$. This produces a closed walk in $H$ that crosses each of the distinct $f$-labelled hyperplanes represented by the cycle exactly once. A closed walk cannot cross a separating hyperplane exactly once. Therefore $T_f$ is a tree.

Let $P$ be a shortest path in $H$ from $M$ to $N$. Its edges cross distinct hyperplanes. After applying $q_f$ and deleting stationary steps, its $f$-labelled edges give distinct edges of the tree $T_f$. This walk cannot repeat a vertex, so it is the unique path from $q_f(M)$ to $q_f(N)$. Its length is the number of $f$-labelled edges of $P$. Summing over all faces proves
\begin{equation}\label{eq:distance}
 d_H(M,N)=\sum_{f\in\F}d_{T_f}\bigl(q_f(M),q_f(N)\bigr).
\end{equation}
Equation~\eqref{eq:distance} also proves injectivity of $q$ and establishes the required isometry.\hfill$\square$

This construction respects medians. Write $\mu_H(M,N,P)$ for the median of three matchings and $\mu_{T_f}$ for the median in a tree. Then
\[
 q_f\bigl(\mu_H(M,N,P)\bigr)
 =\mu_{T_f}\bigl(q_f(M),q_f(N),q_f(P)\bigr).
\]
Indeed, each of the three interval equalities for the median in $H$, together with \eqref{eq:distance}, is a sum of triangle inequalities in the factors. Equality in the sum forces equality in each factor. The image thus lies in the three corresponding tree intervals, whose intersection is the tree median. This is an application of standard median geometry, rather than a separate assumption on matchings.

The Hamming distance between two binary vectors is the number of coordinates in which they differ. We can now characterize exactly when the parity map $x$ from \eqref{eq:coordinates} is isometric. This addresses the limitation of the induced hypercube embedding noted by Tratnik and Ye.
\begin{corollary}\label{cor:isometric}
The facial parity embedding $x:V(H)\to\{0,1\}^{\F}$ is isometric if and only if every $T_f$ has at most one edge.
\end{corollary}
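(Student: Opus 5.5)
The plan is to compare the Hamming distance of the parity vectors with the tree distances in \eqref{eq:distance}. For matchings $M,N\in V(H)$, the coordinate $x_f(M)$ is the parity of the number of $f$-labelled edges on any walk from $M_*$ to $M$. This parity is well defined by \eqref{eq:closedparity}. Since $T_f$ is a tree, hence bipartite, and every $f$-labelled edge of $H$ moves $q_f$ along one edge of $T_f$, we get that $x_f(M)+x_f(N)$ equals $d_{T_f}(q_f(M),q_f(N))$ modulo two. Consequently $|x_f(M)-x_f(N)|\le d_{T_f}(q_f(M),q_f(N))$, with equality exactly when this tree distance is $0$ or $1$. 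Summing over $f\in\F$ and using \eqref{eq:distance}, the Hamming distance between $x(M)$ and $x(N)$ is at most $d_H(M,N)$. Equality holds if and only if every factor distance $d_{T_f}(q_f(M),q_f(N))$ is at most one.

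\emph{Sufficiency.} Suppose every $T_f$ has at most one edge. Then every factor distance is at most one, each term of the sum is an equality, and $x$ is isometric.

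\emph{Necessity.} Suppose some $T_f$ has at least two edges. Since $T_f$ is a tree, it contains a path of length two, say through quotient vertices $C_0,C_1,C_2$. The map $q_f$ is surjective, because each quotient vertex is a nonempty component of $H-E_f(H)$. So we can choose $M\in C_0$ and $N\in C_2$, giving $d_{T_f}(q_f(M),q_f(N))=2$ while $x_f(M)=x_f(N)$. The strict inequality in the $f$-term then makes the Hamming distance strictly smaller than $d_H(M,N)$, so $x$ is not isometric.

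The one point needing care is the parity identity between $x_f$ and the tree distance. For this I would fix the bipartition of $T_f$ containing $q_f(M_*)$. Along any walk from $M_*$, the side of $q_f$ changes exactly at $f$-labelled edges, because non-$f$ edges stay inside a component of $H-E_f(H)$. The coordinate $x_f$ also changes exactly at $f$-labelled edges, by the earlier observation that an edge labelled $f$ changes exactly coordinate $f$. Hence $x_f(M)$ records the side of $q_f(M)$ in the bipartition, and the parity identity follows. The rest is bookkeeping with \eqref{eq:distance}.
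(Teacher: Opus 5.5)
Your proof is correct and follows essentially the same route as the paper: you establish the congruence $x_f(M)+x_f(N)\equiv d_{T_f}(q_f(M),q_f(N)) \pmod 2$, then apply \eqref{eq:distance} termwise for sufficiency, and for necessity you take preimages of two quotient vertices at distance two. The only small difference is that you derive the congruence from the bipartition of the tree $T_f$, whereas the paper reads it off from the count of $f$-labelled edges on a shortest path in the proof of Theorem~\ref{thm:trees}; both derivations are valid.
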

\begin{proof}
Each $f$-flip changes coordinate $x_f$, and no other flip changes it. The proof of Theorem~\ref{thm:trees} therefore gives
\begin{equation}\label{eq:treeparity}
 x_f(M)+x_f(N)\equiv d_{T_f}\bigl(q_f(M),q_f(N)\bigr)\pmod2.
\end{equation}
If every factor has at most one edge, its distances are zero or one, and \eqref{eq:distance} and \eqref{eq:treeparity} show that graph distance equals Hamming distance.

If a factor $T_f$ has at least two edges, it contains two vertices at distance two. Choose preimages $M,N$ under the surjective map $q_f$. Coordinate $f$ contributes zero to their Hamming distance but two to the right side of \eqref{eq:distance}. Each other coordinate contributes at most its corresponding tree distance, by \eqref{eq:treeparity}. Thus the Hamming distance is strictly smaller than $d_H(M,N)$, so $x$ is not isometric.
\end{proof}

The theorem also applies component by component when the original allowed set is not proper. For $\F\subseteq\F(G)$, let
\[
 \F_H=\{f\in\F:\text{$f$ is alternating at some }M\in V(H)\}.
\]
\begin{corollary}\label{cor:inactive}
If $\F_H\subsetneq\F(G)$, then $H$ is median.
\end{corollary}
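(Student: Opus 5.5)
The plan is to reduce to Theorem~\ref{thm:main} by replacing $\F$ with the smaller allowed set $\F_H$. If $\F_H$ is empty, then $H$ has no edges. Since $H$ is connected, it is then a single vertex, which is median. Otherwise, $\F_H$ is a nonempty set of even faces with $\F_H\subsetneq\F(G)$, so Theorem~\ref{thm:main} applies to $R(G;\F_H)$. It then suffices to show that $H$ is itself a connected component of $R(G;\F_H)$.

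First, I will check that $H$ is a subgraph of $R(G;\F_H)$ with the same vertices and edges. Every edge of $H$ is a flip $M\triangle N=E(f)$ with $f\in\F$. The face $f$ is then $M$-alternating and $M\in V(H)$, so $f\in\F_H$. Hence every edge of $H$ is an edge of $R(G;\F_H)$, and $H$ stays connected there.

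Second, I will check maximality. Suppose $M\in V(H)$ and $MN$ is an edge of $R(G;\F_H)$ with label $f\in\F_H\subseteq\F$. Then $MN$ is also an edge of $R(G;\F)$, so $N\in V(H)$. Thus no edge of $R(G;\F_H)$ leaves $V(H)$. Conversely, every edge of $R(G;\F_H)$ between two vertices of $H$ has its label in $\F$, so it is already an edge of $H$. Therefore $H$ is exactly the component of $R(G;\F_H)$ containing any of its vertices.

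Nothing here is a real obstacle. The only point to watch is that the hypotheses of Theorem~\ref{thm:main} are met by $\F_H$: it must consist of even faces, which holds because $\F_H\subseteq\F$, and it must be proper, which is the assumption of the corollary. The edge-label uniqueness from Lemma~\ref{lem:independence} is not needed, since an edge of either resonance graph is determined by its endpoint matchings. Applying Theorem~\ref{thm:main} to $R(G;\F_H)$ then shows that $H$ is median.
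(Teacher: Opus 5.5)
Your proposal is correct and follows the paper's proof. Like the paper, you show that $H$ is exactly a component of $R(G;\F_H)$, because every edge of $H$ uses a face in $\F_H$ and $R(G;\F_H)$ is a spanning subgraph of $R(G;\F)$, and you then apply Theorem~\ref{thm:main}; your separate treatment of $\F_H=\varnothing$ is harmless but unnecessary, since Theorem~\ref{thm:main} already covers an empty allowed set.
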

\begin{proof}
Every edge of $H$ uses a face in $\F_H$. Conversely, flipping a face of $\F_H$ at a matching of $H$, whenever legal, is an edge of the original resonance graph and stays in $H$. Hence $H$ is a component of $R(G;\F_H)$. Apply Theorem~\ref{thm:main}.
\end{proof}

A \emph{fullerene} is a cubic, 3-connected plane graph whose faces are pentagons and hexagons. Its resonance graph allows flips of the hexagonal faces. The following resolves the earlier Conjecture~6.3 of Tratnik and \v Zigert Pleter\v sek \cite{TratnikPletersek} as a special case.
\begin{corollary}\label{cor:fullerene}
Every connected component of the resonance graph of a fullerene is median.
\end{corollary}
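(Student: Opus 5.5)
The plan is to reduce the statement to Theorem~\ref{thm:main}. The only hypothesis to verify is that the set of allowed faces is a proper subset of all faces of the embedding. A fullerene $G$ is a plane graph, so it is embedded in the sphere, which is a connected closed surface. The allowed faces $\F$ are the hexagonal faces. Each face of a fullerene is bounded by a simple cycle, because $G$ is $3$-connected and plane, so every facial walk is a cycle. Hence every hexagonal face is an even face in the sense used here: its boundary is a simple cycle of length six. The graph is finite and simple by definition.

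The key step is to show $\F\subsetneq\F(G)$, that is, that a fullerene has at least one pentagonal face. I will count using Euler's formula on the sphere. Let $n$, $m$ and $\phi$ denote the numbers of vertices, edges and faces, and let $p$ and $h$ count the pentagons and hexagons. Cubicity gives $2m=3n$. Counting edge–face incidences gives $2m=5p+6h$, which uses that each edge borders two distinct faces in a $3$-connected plane graph. Substituting into $n-m+\phi=2$ with $\phi=p+h$ yields the classical identity $p=12$. In particular $p\geq1$, so the set of hexagonal faces is proper. Alternatively, it suffices to note that a cubic plane graph with all faces hexagonal would give $6\phi=2m=3n$ and hence $n-m+\phi=0\ne2$, a contradiction.

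With both hypotheses checked, Theorem~\ref{thm:main} applies directly. Every connected component of $R(G;\F)$ with $\F$ the set of hexagons is therefore median. I expect no real obstacle here. The only point needing a remark is that facial boundaries are cycles, so that hexagons qualify as even faces and the incidence count is exact; $3$-connectivity supplies this via Whitney's theorem on facial cycles. If $G$ has no perfect matching, the statement is vacuous, although fullerenes always have one.
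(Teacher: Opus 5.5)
Your proposal is correct and matches the paper's proof: use Euler's formula with cubicity to get twelve pentagonal faces, conclude that the hexagonal faces form a proper subset of $\F(G)$, and apply Theorem~\ref{thm:main}. Your added check that hexagonal faces are bounded by simple cycles (via $3$-connectivity), so that they qualify as even faces, spells out a point the paper leaves implicit.
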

\begin{proof}
Regard the plane embedding as an embedding in the sphere. Euler's formula and cubicity give exactly twelve pentagonal faces. The allowed hexagonal faces therefore form a proper subset of all faces, so Theorem~\ref{thm:main} applies.
\end{proof}

Finally, the properness condition cannot be dropped uniformly. The next example is the standard theta-graph obstruction exhibited by Tratnik and Ye \cite[preprint, Figure~1]{TratnikYe}.
\begin{example}\label{ex:allfaces}
Take three internally disjoint $u$--$v$ paths of lengths $1,3,3$ in the sphere. Their unions in pairs bound the three faces, of lengths $4,4,6$. The graph has three perfect matchings: one uses the edge $uv$ and the internal edge of each longer path; each of the other two uses both end edges of one longer path and the internal edge of the other. Any two of these matchings differ on exactly one of the three facial cycles. Allowing all three faces therefore gives the triangle $K_3$, which is not median. Allowing any two faces gives a path on three vertices.
\end{example}

\subsection*{Declaration of AI assistance}
AI-assisted tools were used during revision for proof checking, literature searches, and language editing.

\end{document}